\newtheorem{thm}{Theorem}[section]
\newtheorem{cor}[thm]{Corollary}
\newtheorem{lem}[thm]{Lemma}
\newtheorem{prop}[thm]{Proposition}
\theoremstyle{definition}
\theoremstyle{remark}
\newtheorem{rem}[thm]{Remark}
\numberwithin{equation}{section}
\newcommand{\kf}{K\pi_{4}(F)}
\newcommand{\om}{\Omega}
\newcommand{\autb}{\mathrm{Aut}(B)}
\begin{document}

\title{Finiteness and infiniteness results for Torelli groups of (Hyper-)K\"ahler manifolds }%
\author{Matthias Kreck and Yang Su}%
\address{Mathematisches Institut, Universit\"at Bonn and Mathematisches Institut der Universit\"at Frankfurt}
\email{kreck@math.uni-bonn.de}
\address{HLM, Academy of Mathematics and Systems Science, Chinese Academy of Sciences, Beijing 100190, China}
\address{School of Mathematical Sciences, University of Chinese Academy of Sciences, Beijing 100049, China}
\email{suyang@math.ac.cn}


\date{}
\begin{abstract} The Torelli group $\mathcal T(X)$  of a closed smooth manifold $X$ is the subgroup of the mapping class group $\pi_0(\mathrm{Diff}^+(X))$ consisting of elements which act trivially on the integral cohomology of $X$. In this note we give  counterexamples to Theorem 3.4 by Verbitsky \cite{V} which states that the Torelli group of simply connected K\"ahler manifolds of complex dimension $\ge 3$ is finite. This is done by constructing under some mild conditions homomorphisms $J: \mathcal T(X) \to H^3(X;\mathbb Q)$ and showing that for certain K\"ahler manifolds this map is non-trivial. We also give a counterexample to Theorem 3.5 (iv) in \cite{V} where Verbitsky claims that the Torelli group of hyperk\"ahler manifolds are finite. These examples are detected by the action of diffeomorphsims on $\pi_4(X)$. Finally we confirm the finiteness result for the special case of the hyperk\"ahler manifold $K^{[2]}$.
\end{abstract}

\maketitle
\section{Introduction} Let $X$ be a closed connected oriented manifold. The {\bf mapping class group}  $\mathcal M (X)$ is the group of connected components of the orientation preserving self diffeomorphisms of $X$. The subgroup consisting of diffeomorphisms which act trivially on the integral cohomology ring is called the {\bf Torelli group} of $X$, denote by $\mathcal T (X)$.  In this note we consider two classes of manifolds, the first is characterized by the  following assumptions:

{\bf Assumption (*)}
{\it The manifold $X$ has trivial first rational homology and the first Pontrjagin class (considered in rational cohomology) $p_1(X)\in H^4(X; \mathbb Q)$ is a linear combination of products of $2$-dimensional classes, i.e. there exist classes $z_i \in H^2(X;\mathbb Q)$ and rational numbers $a_{ij}$ such that
 $p_1(X) = \sum a_{ij}z_i \cup z_j$. We denote the set consisting of the classes $z_i$ and of the rational numbers $a_{ij}$ by $D$, like definition data.}

We construct a homomorphism
$$J_D : \mathcal T (X) \to H^3(X;\mathbb Q)
$$
which we consider as a sort of Johnson homomorphism since it is landing in an abelian group and constructed via the mapping torus. The construction is as follows. Consider the mapping torus $X_f$ of $f$. Since $f$ acts trivially on the cohomology groups the Wang sequence looks like:
$$
0 \to H^{k-1}(X; \mathbb Q) \stackrel{\delta}{\rightarrow} H^{k}(X_f; \mathbb Q) \stackrel{i^{*}}{\rightarrow} H^k(X;\mathbb Q) \to 0
$$
Since $H^1(X;\mathbb Q) = 0$  this implies that the inclusion induces an ismomorphism $H^2(X_f;\mathbb Q) \to H^2(X;\mathbb Q)$ and so for each class $z \in H^2(X;\mathbb Q)$ there is a unique class $\bar z \in H^2(X_f;\mathbb Q)$ restricting to $z$. Next we look at $p_1(X_f) - \sum a_{ij} \bar z_i \cup \bar z_j \in H^4(X_f; \mathbb Q)$. By construction it's restriction to $H^4(X;\mathbb Q)$ is zero. Thus there exists a unique class denoted by $J_D(f) \in H^3(X;\mathbb Q)$ mapping to this class in the Wang sequence.

\begin{prop}\label{prop:homo}
Let $X$ fulfil the Assumptions (*), then the map
$$
J_D: \mathcal T(X) \to H^3(X;\mathbb Q)
$$
is a homomorphism.
\end{prop}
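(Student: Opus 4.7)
Given $f, g \in \mathcal T(X)$, the plan is to realize $f$, $g$, and $fg$ simultaneously as monodromies of a single smooth $X$-bundle over a pair of pants, and to extract the additivity $J_D(fg) = J_D(f) + J_D(g)$ from the $H^*(P;\mathbb Q)$-module structure on the cohomology of the total space. Concretely, let $P$ be a $2$-disk with two open subdisks removed (a pair of pants), with inner boundary circles $S^1_f$, $S^1_g$ and outer boundary $S^1_{\mathrm{out}}$. Since $\pi_1(P)$ is free of rank $2$, one can form a smooth $X$-bundle $\pi\colon V \to P$ with monodromy $f$ around $S^1_f$ and $g$ around $S^1_g$. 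Because $[S^1_{\mathrm{out}}] = [S^1_f] + [S^1_g]$ in $H_1(P;\mathbb Z)$ with counterclockwise orientations, the monodromy around $S^1_{\mathrm{out}}$ is $fg$, and $V$ is a cobordism with $\partial V = X_{fg} \sqcup (-X_f) \sqcup (-X_g)$.

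The next step is to extend both $p_1$ and the classes $z_i$ from the fibers to $V$. Since $f$ and $g$ act trivially on $H^*(X;\mathbb Q)$, the associated local system on $P$ is trivial and Leray--Hirsch yields $H^*(V;\mathbb Q) \cong H^*(P;\mathbb Q) \otimes H^*(X;\mathbb Q)$ as $H^*(P;\mathbb Q)$-modules; in particular each $z_i$ lifts to a class $\tilde z_i \in H^2(V;\mathbb Q)$ whose restriction to every boundary component $X_\phi$ equals $\bar z_i$. The decomposition $TV = T^vV \oplus \pi^*TP$ combined with $p_1(TP) \in H^4(P;\mathbb Q) = 0$ gives $p_1(V) = p_1(T^vV)$, and since $X_\phi$ has trivial normal bundle in $V$ this forces $p_1(V)|_{X_\phi} = p_1(X_\phi)$. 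Setting
\[
P_V \;:=\; p_1(V) - \sum a_{ij}\,\tilde z_i \cup \tilde z_j \;\in\; H^4(V;\mathbb Q),
\]
one therefore has $P_V|_{X_\phi} = P_\phi$ for each $\phi \in \{f, g, fg\}$, where $P_\phi \in H^4(X_\phi;\mathbb Q)$ is the class whose Wang preimage is $J_D(\phi)$ by definition.

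Finally, one reads off the three restrictions via Leray--Hirsch. Since $P$ is a surface with non-empty boundary, $H^2(P;\mathbb Q) = 0$, so the Leray--Hirsch decomposition in degree $4$ simplifies to $H^4(V;\mathbb Q) \cong H^4(X;\mathbb Q) \oplus H^3(X;\mathbb Q) \otimes H^1(P;\mathbb Q)$. The $H^4(X)$-component of $P_V$ equals its restriction to a single fiber, which is $p_1(X) - \sum a_{ij} z_i z_j = 0$ by Assumption~(*). Choosing the basis $\omega_f, \omega_g \in H^1(P;\mathbb Q)$ dual to the counterclockwise inner circles, one can thus write $P_V = \alpha_f\, \omega_f + \alpha_g\, \omega_g$ with $\alpha_f, \alpha_g \in H^3(X;\mathbb Q)$. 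The restriction of $\omega_f$ to $S^1_f$ is the generator of $H^1(S^1_f;\mathbb Q)$ while $\omega_g|_{S^1_f} = 0$, so the Wang sequence identifies $\alpha_f = J_D(f)$; symmetrically $\alpha_g = J_D(g)$. The homology relation $[S^1_{\mathrm{out}}] = [S^1_f] + [S^1_g]$ forces both $\omega_f$ and $\omega_g$ to restrict to the generator of $H^1(S^1_{\mathrm{out}};\mathbb Q)$, whence $J_D(fg) = \alpha_f + \alpha_g = J_D(f) + J_D(g)$. The only real subtlety is the bookkeeping of orientations on $P$, its three boundary circles, and the induced mapping tori, so that the Wang coefficients are read off with the correct signs; once this is fixed the rest is pure naturality of Leray--Hirsch.
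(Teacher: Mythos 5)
Your argument is correct, but it takes a genuinely different route from the paper. The paper stays inside the single mapping torus $X_{fg}$: it cuts $X_{fg}$ along two fibres $X_1,X_2$, identifies $H^4(X_{fg},X_1\cup X_2;\mathbb Q)\cong H^4(X_f,X;\mathbb Q)\oplus H^4(X_g,X;\mathbb Q)$, and observes that the further restriction to $H^4(X_{fg},X_1;\mathbb Q)\cong H^3(X;\mathbb Q)$ is the sum map, carrying the pair of defect classes $p_1-\sum a_{ij}\bar z_i\bar z_j$ for $X_f$ and $X_g$ to the defect class of $X_{fg}$; this uses nothing beyond the Wang sequence, excision and the suspension isomorphism. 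You instead build an $X$-bundle $V$ over a pair of pants with $\partial V = X_{fg}\sqcup(-X_f)\sqcup(-X_g)$ and read off additivity from the structure of $H^4(V;\mathbb Q)$; this is a more conceptual ``cobordism of monodromies'' proof, and it has the virtue of exhibiting the defect class as a natural class for families over an arbitrary base, which makes relations beyond plain additivity equally transparent. Two points should be tightened. First, a trivial local system does not by itself give Leray--Hirsch; what you actually use is that $P$ is homotopy equivalent to a wedge of two circles, so $H^{\ge 2}(P;\mathbb Q)=0$, the Leray--Serre spectral sequence degenerates, restriction to the fibre is surjective, and the resulting degree-$4$ extension $0\to H^1(P;H^3(X;\mathbb Q))\to H^4(V;\mathbb Q)\to H^4(X;\mathbb Q)\to 0$ splits over $\mathbb Q$ --- that is the statement your computation needs. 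Second, the assertion that $\tilde z_i$ restricts to $\bar z_i$ on each boundary mapping torus uses the uniqueness of $\bar z_i$, i.e.\ $H^1(X;\mathbb Q)=0$ from Assumption (*), which deserves an explicit mention. With consistent conventions (each boundary circle oriented so that the monodromies are $f$, $g$ and $fg$ respectively, in which case $[S^1_{\mathrm{out}}]=[S^1_f]+[S^1_g]$ does hold, and with the Wang coboundary identified with cup product by the pulled-back generator of $H^1(S^1;\mathbb Q)$), the sign bookkeeping you flag works out and yields $J_D(fg)=J_D(f)+J_D(g)$.
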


Next we are looking for K\"ahler manifolds where this invariant is non-trivial. For this we prove a purely topological result.

\begin{thm}\label{thm:kaehler}
Let $X$ be a simply connected $6$-manifold with $H_2(X;\mathbb Z) \cong \mathbb Z$ and nontrivial cohomology product $H^{2}(X;\mathbb Z) \times H^{2}(X;\mathbb Z) \to H^{4}(X;\mathbb Z)$.  Then the Assumptions (*) are fulfilled and  the invariant is independent of any choices and is denoted by $J$. The image of $J$ is a lattice in $H^3(X; \mathbb Q)$, i.e. a finitely generated abelian group of same rank as the dimension of $H^3(X;\mathbb Q)$.
\end{thm}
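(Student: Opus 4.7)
My plan is to verify Assumptions (*) and independence first, then bound the image in a lattice, and finally exhibit enough diffeomorphisms to span $H^3(X;\mathbb Q)$. Since $X$ is simply connected, $H^1(X;\mathbb Q)=0$, and Poincar\'e duality combined with $H_2(X;\mathbb Z)\cong\mathbb Z$ forces $H^2(X;\mathbb Q)\cong H^4(X;\mathbb Q)\cong\mathbb Q$. The non-triviality of the cup product then says that for any generator $z$ of $H^2(X;\mathbb Q)$ the class $z\cup z$ is a generator of $H^4(X;\mathbb Q)$, so $p_1(X)=a\,z\cup z$ for a unique $a\in\mathbb Q$, furnishing the data $D=(z,a)$. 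A rescaling $z\mapsto\lambda z$ forces $a\mapsto a\lambda^{-2}$; since the unique rational Wang lift $\bar z\in H^2(X_f;\mathbb Q)$ also rescales by $\lambda$, the product $a\,\bar z\cup\bar z$ and hence $J_D(f)$ are unchanged. Consequently $J_D$ is independent of the (essentially unique) choice of $D$, and we write $J$ for it.

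For the lattice bound I would exploit that $f^{*}$ is the identity on integer cohomology, which makes the integer Wang sequence split into short exact sequences
\[
0\to H^{k-1}(X;\mathbb Z)\xrightarrow{\delta}H^{k}(X_f;\mathbb Z)\xrightarrow{i^{*}}H^{k}(X;\mathbb Z)\to 0 .
\]
For $k=2$, together with $H^1(X;\mathbb Z)=0$, this gives a unique integral lift $\bar z\in H^2(X_f;\mathbb Z)$ of the integer generator of $H^2(X;\mathbb Z)$. Writing $a=p/q$ in lowest terms, the class $q\bigl(p_1(X_f)-a\,\bar z\cup\bar z\bigr)$ is integral and restricts to zero on $X$, so by the $k=4$ sequence it lifts uniquely to an element $qJ(f)\in H^3(X;\mathbb Z)/\text{torsion}$. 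Therefore $J(\mathcal T(X))\subset \tfrac{1}{q}H^3(X;\mathbb Z)/\text{torsion}\subset H^3(X;\mathbb Q)$, and in particular the image is finitely generated.

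The hard part is showing the image has full rank. For each $\alpha\in H_3(X;\mathbb Z)$ I would construct a diffeomorphism $f_{\alpha,\phi}\in\mathcal T(X)$ with $J(f_{\alpha,\phi})$ a nonzero rational multiple of $\mathrm{PD}(\alpha)$. Every such $\alpha$ is spherical: the Serre spectral sequence of the fibration $\tilde X\to X\to K(\mathbb Z,2)=\mathbb C\mathrm P^{\infty}$ (with $\tilde X$ the $2$-connected cover of $X$), combined with $H_3(\mathbb C\mathrm P^{\infty})=0$, yields $\pi_3(X)\cong H_3(X)$. Since $\dim X=6$, $\pi_1(X)=1$, and the codimension is $3$, Whitney's trick promotes a spherical representative to a smooth embedding $\iota_\alpha\colon S^3\hookrightarrow X$; the vanishing $\pi_2 SO(3)=0$ makes the normal bundle automatically trivial, giving a tubular neighborhood $\nu_\alpha\cong S^3\times D^3$. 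Given $\phi\colon(D^3,\partial D^3)\to(SO(4),1)$ representing a class in $\pi_3(SO(4))$ with nonzero $p_1$, define the generalized Dehn twist $f_{\alpha,\phi}\colon X\to X$ by $(x,y)\mapsto(\phi(y)\cdot x,y)$ on $\nu_\alpha$ and the identity elsewhere; this diffeomorphism lies in $\mathcal T(X)$. Outside the twisted tube the mapping torus $X_{f_{\alpha,\phi}}$ agrees with $X\times S^1$, and a local Pontryagin-class computation (reducing to the $p_1$-pairing on the $SO(4)$-bundle over $S^4$ classified by $\phi$) yields
\[
J(f_{\alpha,\phi})=c(\phi)\cdot\mathrm{PD}[\iota_\alpha S^3],\qquad c(\phi)\in\mathbb Z\setminus\{0\}.
\]
Letting $\alpha$ run over a $\mathbb Z$-basis of $H_3(X;\mathbb Z)/\text{torsion}$ yields classes $J(f_{\alpha,\phi})$ spanning $H^3(X;\mathbb Q)$.

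The principal obstacle will be the Pontryagin-class calculation in the last step: cleanly isolating the contribution of the twist to $p_1(X_{f_{\alpha,\phi}})$ — the $a\,\bar z\cup\bar z$ summand is unaffected since, on the complement of the tube, $\bar z$ is just the pullback from $X\times S^1$ — and then matching this contribution with the classical evaluation $\pi_3(SO(4))\to\mathbb Z$ via $p_1$ of the associated $\mathbb R^4$-bundle over $S^4$. The rest of the argument is essentially formal.
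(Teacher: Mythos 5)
Your overall strategy is the same as the paper's: generalized Dehn twists along embedded $3$-spheres with trivial normal bundle, with twisting parameter of nonzero $p_1$, detected by a local evaluation of $p_1(X_f)$. Your verification of Assumptions (*), the independence of the data $D$ (everything reduces to multiples of $z$ and the lift $z\mapsto\bar z$ is linear and unique), and the integral Wang-sequence bound $J(\mathcal T(X))\subset \tfrac1q H^3(X;\mathbb Z)/\mathrm{torsion}$ are fine; the last point even makes the finite-generation half of the lattice claim more explicit than the paper does. But there is a concrete error in the twist construction: you only require $\phi$ to represent a class in $\pi_3(SO(4))\cong\mathbb Z\oplus\mathbb Z$ with $p_1\neq0$ and then assert $f_{\alpha,\phi}\in\mathcal T(X)$. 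If the Euler number $e(\phi)$ of the associated rank-$4$ bundle over $S^4$ is nonzero, the Dehn twist acts on $H_3(X;\mathbb Z)$ by the transvection $x\mapsto x+e(\phi)\langle x,\alpha\rangle\,\alpha$, which is nontrivial whenever $\alpha\neq0$ modulo torsion (Poincar\'e duality provides $x$ with $\langle x,\alpha\rangle\neq0$); such an $f_{\alpha,\phi}$ is not in the Torelli group and $J$ is not even defined on it. You must take $\phi$ in the kernel of $\pi_3(SO(4))\to\pi_3(S^3)$, i.e.\ the image of a generator of $\pi_3(SO(3))$, which has $e=0$ and $p_1=\pm4$; this is exactly the choice made in the paper.

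The step you defer as ``the principal obstacle'' is precisely where the paper's proof does its real work: it uses the Wall--Jupp splitting $X=N\sharp\, g(S^3\times S^3)$ with $N$ a homology $\mathbb{CP}^3$, twists along the standard spheres $S^3_i$, chooses $\varphi$ with values in $SO(3)$ so that $f$ fixes the relevant spheres pointwise and hence the explicit lifts $\bar e_i=[S^3_i\times S^1]\in H_4(X_f)$ exist, shows $\langle\bar z^2,\bar e_i\rangle=0$ from the induced decomposition of $X_f$, and identifies the normal bundle of $S^3_2\times S^1$ in $X_f$ with $c^*\xi$, giving the value $4$. Your alternative sourcing of the embedded spheres (sphericity of $H_3$ via the $2$-connected cover, the Whitney trick in a simply connected $6$-manifold, and $\pi_2(SO(3))=0$) is legitimate and avoids quoting the classification results, but to finish along your lines you would still need $f$-invariant representatives of dual classes of $\alpha$ (so that they lift to $4$-cycles in $X_f$) and an actual argument that the $a\,\bar z\cup\bar z$ term pairs trivially with those lifts; the remark that $\bar z$ is ``pulled back from $X\times S^1$ off the tube'' is not yet such an argument. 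So: right framework, but one step that fails as stated (the missing Euler-class condition) and the decisive computation only sketched.
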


Considering hypersurfaces $X(d)$ of degree $d>1$ in $\mathbb CP^4$ we obtain complex $3$-dimensional K\"ahler manifolds with infinite Torelli group, since $d>1$ implies that $H^3(X(d);\mathbb Q) \ne 0$. Taking the product with copies of $\mathbb C \mathrm P^1$ one obtains examples in all complex dimensions $\ge 3$. Namely if $f \in \mathcal T(X(d))$ then by definition of $J_D$ we have for $f \times \mathrm{id}: X(d) \times (\mathbb C \mathrm P^1)^r \to X(d) \times  (\mathbb C \mathrm P^1)^r $ and the obvious choice of $D$
 $$
 J_D(f \times \mathrm{id}) = J(f)
 $$
where we identify $H^3(X(d); \mathbb Q)$ with $H^{3}(X(d)\times (\mathbb C \mathrm P^1)^r; \mathbb Q)$ under the projection map.

Thus we obtain the Corollary contradicting Verbitsky's theorem:

\begin{cor} For all $n\ge 3$ there are K\"ahler manifolds of complex dimension $n$ with infinite Torelli group, actually containing elements of infinite order. In particular taking for example the quintic in $\mathbb CP^4$ we obtain Calabi-Yau manifolds with infinite Torelli group.
\end{cor}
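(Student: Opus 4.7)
\medskip
\noindent\textbf{Proof plan.}

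My plan is to apply Theorem~\ref{thm:kaehler} to smooth degree-$d$ hypersurfaces $X(d)\subset\mathbb{CP}^4$ and then to pass to higher complex dimensions by taking products with $\mathbb{CP}^1$. First I would check that $X(d)$ satisfies the hypotheses of Theorem~\ref{thm:kaehler}: the Lefschetz hyperplane theorem yields simple connectivity and $H_2(X(d);\mathbb Z)\cong\mathbb Z$, generated by the class of a line $\ell\subset\mathbb{CP}^4$ lying on $X(d)$ for generic $d$, and the cup product $H^2\times H^2\to H^4$ is nontrivial because the square of the restricted hyperplane class $h|_{X(d)}$ pairs to $d\ne 0$ with the line class. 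Hence Theorem~\ref{thm:kaehler} produces a homomorphism $J:\mathcal T(X(d))\to H^3(X(d);\mathbb Q)$ whose image is a lattice of rank $b_3(X(d))$.

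Next I would verify that $b_3(X(d))>0$ whenever $d\ge 3$. Since $X(d)$ is a smooth simply connected complex threefold, Lefschetz gives $b_0=b_2=b_4=b_6=1$ and $b_1=b_5=0$, so $\chi(X(d))=4-b_3(X(d))$; a standard Chern-class calculation for hypersurfaces in $\mathbb{CP}^4$ gives $\chi(X(d))<4$ precisely for $d\ge 3$ (for instance $b_3=10$ for the cubic and $b_3=204$ for the quintic). Therefore $H^3(X(d);\mathbb Q)\ne 0$, the lattice image of $J$ has positive rank, and $\mathcal T(X(d))$ contains elements of infinite order, which settles the corollary in complex dimension three.

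For $n=3+r\ge 4$ I would take the K\"ahler manifold $Y=X(d)\times(\mathbb{CP}^1)^r$. Given any $f\in\mathcal T(X(d))$ with $J(f)\ne 0$, the product diffeomorphism $f\times\mathrm{id}_{(\mathbb{CP}^1)^r}$ lies in $\mathcal T(Y)$, and the identity $J_D(f\times\mathrm{id})=J(f)$ displayed just before the corollary (which follows from the Whitney product formula for Pontrjagin classes applied to the mapping torus $Y_{f\times\mathrm{id}}=X(d)_f\times(\mathbb{CP}^1)^r$ and the natural definition data $D$ pulled back under the projection), combined with Proposition~\ref{prop:homo}, implies that $f\times\mathrm{id}$ has infinite order in $\mathcal T(Y)$. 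Finally, the quintic $X(5)\subset\mathbb{CP}^4$ has trivial canonical bundle by adjunction, $K_{X(d)}=(d-5)h|_{X(d)}$, so it is a simply connected Calabi-Yau threefold with infinite Torelli group. No serious obstacle remains: all the substantive content has been placed in Theorem~\ref{thm:kaehler}, and the remaining steps amount to the classical Betti number calculation for hypersurfaces and the transparent compatibility of $J_D$ with products.
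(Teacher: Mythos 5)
Your proposal is correct and follows essentially the same route as the paper: degree-$d$ hypersurfaces in $\mathbb{CP}^4$ feeding into Theorem~\ref{thm:kaehler}, then products with $(\mathbb{CP}^1)^r$ and the compatibility $J_D(f\times\mathrm{id})=J(f)$ to reach all $n\ge 3$. In fact your Euler-characteristic computation is more careful than the paper's statement that $d>1$ suffices: the quadric threefold has $b_3=0$, so the correct range is $d\ge 3$, exactly as you say.
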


\begin{rem} For simply connected $6$-manifolds $X$ with $H_2(X; \mathbb Z) \cong \mathbb Z$ we compute in \cite{K-S} the full mapping class group as well as the Torelli group and give generators of these groups. In particular we prove that an element $f$ in the Torelli group has finite order if and only if $J(f) = 0$.

\end{rem}

\begin{rem} In general the invariant $J_D$ depends on the choice of the data. So, in general there are many homomorphisms depending on the choice of the data. The simplest example is $X = S^2 \times S^4 \sharp S^3 \times S^3$, where for $z=0$ and $z$ the generator of $H^2(X;\mathbb Z)$ the difference of the $J$-invariants with these data is non-trivial, as shown in \cite{K-S}.
\end{rem}

Now we come to the second class of manifolds which is characterized by the following assumptions:

{\bf Assumption (**)}
{\it The manifolds $X$ are simply connected and $8$-dimensional, whose Betti numbers satisfy one of the following conditions
\begin{enumerate}
\item $b_2 > 2$, $b_3 \ge 1$, $b_4 > b_2(b_2+1)/2$;
\item $b_2=2$, $b_3 > 1$, $b_4 > b_2(b_2+1)/2$;
\item  $b_2=2$, $b_3 =1$, $b_4 > b_2(b_2+1)/2$ and $p_1(X) = 0 \in H^4(X;\mathbb Q)$).
\end{enumerate}
}

\begin{thm}\label{thm:infinite}
If $X$ satisfies the assumptions (**), then the Torelli group  $\mathcal T(X)$ is infinite.
\end{thm}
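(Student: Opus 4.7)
The plan is to detect an infinite subgroup of $\mathcal T(X)$ via the representation
$$\rho:\mathcal T(X)\longrightarrow \mathrm{Aut}(\pi_4(X)),$$
as announced in the abstract. To understand its image I would first analyze $\pi_4(X)\otimes\mathbb Q$ through the low-degree part of the minimal Sullivan model $(\Lambda V,d)$ of $X$, where $V^n$ is dual to $\pi_n(X)\otimes\mathbb Q$. Through degree $4$ this model is governed by $b_2,b_3,b_4$ and by the cup product $\mu:\mathrm{Sym}^2 H^2(X;\mathbb Q)\to H^4(X;\mathbb Q)$: one has $\dim V^2=b_2$, $\dim V^3=b_3+\dim\ker\mu$, and $\dim V^4\ge b_4-\mathrm{rk}\,\mu\ge b_4-\binom{b_2+1}{2}>0$ by the strict inequality in (**). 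Thus indecomposable generators exist in $V^4$, and together with the hypotheses on $b_3$ (and $p_1(X)=0$ in case (3)) they produce a nonzero space of degree-$4$ decomposables lying outside the image of $\mu$.

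Second I would exploit this room to build mapping classes. Picking an indecomposable generator $v\in V^4$ and a decomposable degree-$4$ class $w$ built from a $V^2$-generator and a $V^3$-generator, the formula $\phi_n(v)=v+nw$ (identity on all other generators) extends to a dga-automorphism of the minimal model that is trivial on cohomology; the $\phi_n$ are pairwise inequivalent for distinct $n$. This yields an infinite family in the rational homotopy automorphism group $\pi_0(\mathrm{haut}(X))\otimes\mathbb Q$, sitting in the rational Torelli subgroup. The final step is to realize these classes by honest self-diffeomorphisms via surgery theory for simply connected $8$-manifolds (Kreck's modified surgery framework), producing infinitely many elements of $\mathcal T(X)$ whose actions on $\pi_4(X)$ remain pairwise distinct.

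The principal obstacle is this last realization step: promoting the Sullivan-model automorphisms to genuine diffeomorphisms while preserving the distinction on $\pi_4(X)$, where the obstructions live in $L$-groups and the structure set and must only affect finite ambiguity. Case (3), with $b_3=1$ and additionally $p_1(X)=0$, is the most delicate, since the scarcity of degree-$3$ data must be compensated by the vanishing Pontrjagin class; the hypothesis $p_1(X)=0$ is exactly what removes the otherwise obstructing $p_1$-term in the minimal-model analysis, permitting the construction of an appropriate decomposable class $w$ and the subsequent surgery-theoretic realization.
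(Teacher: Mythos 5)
Your overall strategy (detect infinitely many classes by their action on $\pi_4(X)$, use the Sullivan model to find room in degree $4$, realize by modified surgery) is the paper's strategy in outline, but the core construction as you state it does not work. First, your class $w$ ``built from a $V^2$-generator and a $V^3$-generator'' has degree $5$, not $4$; the only decomposables in degree $4$ are products of two degree-$2$ generators. If you repair this by taking $w=x_ix_j$, then $\phi_n(v)=v+nw$ with $v$ a \emph{closed} indecomposable generator changes $[v]$ to $[v]+n[x_ix_j]$ in $H^4$, and $[x_ix_j]$ is typically nonzero (for the intended hyperk\"ahler examples the map $\mathrm{Sym}^2H^2\to H^4$ is injective), so $\phi_n$ is not trivial on cohomology and does not point at the Torelli group at all. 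The correct source of ``Torelli directions'' is the opposite one: elements of $\pi_4$ in the kernel of the Hurewicz map, dual to \emph{non-closed} degree-$4$ generators $z_i$ whose differentials hit the span of the $x_iy_j$; their existence is forced by the Poincar\'e duality count $b_5=b_3$ against the $b_2b_3$ products $x_iy_j$, giving at least $(b_2-1)b_3$ such generators. This degree-$5$ count, which is where the hypotheses on $b_3$ actually enter, is absent from your argument. Relatedly, your explanation of the hypothesis $p_1(X)=0$ in case (3) is misplaced: $p_1$ plays no role in the minimal model of $X$; it enters through the fibration $F\to B\to BSO$ defining the normal $4$-type, because one needs kernel-of-Hurewicz classes coming from $\pi_4(F)$ (i.e.\ compatible with the normal data), and when $p_1\neq 0$ the map $\pi_4(F)\otimes\mathbb Q\to\pi_4(B)\otimes\mathbb Q$ misses one dimension, which in case (3) ($(b_2-1)b_3=1$) would destroy everything.

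Second, the step you defer as ``the principal obstacle'' is in fact the heart of the proof, and your route through homotopy automorphisms of $X$ itself is not how it is overcome: realizing self-homotopy-equivalences of a given $8$-manifold by diffeomorphisms is genuinely obstructed, and nothing in your setup guarantees compatibility with the tangential data. The paper instead never touches $\mathrm{haut}(X)$: it builds fiber homotopy self-equivalences $h$ of the normal $4$-type $B$ over $BSO$ from cocycles with values in $K\pi_4(F)$, shows $h_*$ is the identity on $H_8(B;\mathbb Q)$ so that the normal smoothings $\bar\nu$ and $h\circ\bar\nu$ agree in $\Omega_8(B,p)\otimes\mathbb Q$ (hence infinitely many agree integrally), and then uses $L_9(\mathbb Z)=0$ to turn a $B$-bordism into an $h$-cobordism, producing diffeomorphisms $f_i$ with prescribed pairwise distinct actions on $\pi_4(X)$; the Torelli property of the $f_i$ comes from $\bar\nu$ being a $5$-equivalence together with Poincar\'e duality, not from any cohomological triviality of a self-equivalence of $X$. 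Without this mechanism (or an equivalent one), your proposal does not yield elements of $\mathcal T(X)$, so the argument has a genuine gap both in the rational-homotopy input and in the realization step.
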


We apply this result to find a counterexample to Theorem 3.5 (iv) in \cite{V}. For this we consider the complex $4$-dimensional hyperk\"ahler manifold $K^2(T)$. For the construction and the following information we refer to \cite{S} (where it is called $K_2$). The manifold is simply connected and the Betti numbers are: $b_2(K ^2(T)) = 7$, $b_3(K^2(T)) = 8$ and $b_4(K^2(T)) = 108$. Thus our theorem implies:

\begin{cor} The Torelli group of the hyperk\"ahler manifold $K^2(T)$ is not finite.

\end{cor}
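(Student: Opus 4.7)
The plan is to reduce the corollary to Theorem~\ref{thm:infinite} by checking that the hyperk\"ahler manifold $K^2(T)$ satisfies Assumption (**). Since $K^2(T)$ is a complex $4$-dimensional manifold, it is a closed $8$-manifold as required, and simple connectedness is recorded among the standard facts cited from \cite{S}. The only remaining work is therefore a numerical check on the Betti numbers against the three alternative conditions in Assumption~(**).

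Inserting the values $b_2(K^2(T)) = 7$, $b_3(K^2(T)) = 8$, and $b_4(K^2(T)) = 108$ quoted from \cite{S}, I would check condition (1): we have $b_2 = 7 > 2$, $b_3 = 8 \ge 1$, and
\[
b_4 \;=\; 108 \;>\; 28 \;=\; \frac{b_2(b_2+1)}{2}.
\]
All three inequalities of case (1) hold, so Assumption~(**) is satisfied without needing to appeal to cases (2) or (3); in particular no information about $p_1(K^2(T))$ is required. Applying Theorem~\ref{thm:infinite} then gives that $\mathcal T(K^2(T))$ is infinite.

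In this sense there is no real obstacle in the corollary itself: the proof is a one-line numerical verification plus citation of the Betti number computation in \cite{S}. The substantive content lies entirely in Theorem~\ref{thm:infinite}, whose proof (using the action of diffeomorphisms on $\pi_4(X)$, as indicated in the abstract) is what actually does the work; the role of the corollary is to exhibit a hyperk\"ahler example comfortably inside the range of the theorem's hypotheses and thereby contradict Theorem 3.5 (iv) of \cite{V}.
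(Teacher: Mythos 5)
Your proposal is correct and matches the paper's own argument: the authors likewise quote $b_2=7$, $b_3=8$, $b_4=108$ from \cite{S}, note simple connectedness, and invoke Theorem~\ref{thm:infinite} via case (1) of Assumption (**) with $108 > 28 = b_2(b_2+1)/2$. Nothing further is needed.
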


This contradicts Verbitsky's Theorem 3.5, iv (\cite {V}).\\

There is another complex $4$-dimensional hyperk\"ahler manifold denoted by $K^{[2]}$, where $K$ is a $K_3$-surface (for the construction see again for example \cite{S}). In this case we show that the Torelli group is finite. This follows from:

\begin{thm}\label{thm:finite}
Let $X$ be a simply-connected  closed $8$-manifold satisfying the following conditions
\begin{enumerate}
\item
$H^{4}(X;\mathbb Q)$ isomorphic to the second symmetric power of $H^2(X;\mathbb Q)$.
\item $H^{3}(X;\mathbb Q)=0$,
\item $p_1(X) \ne 0 \in H^4(X;\mathbb Q)$.
\end{enumerate}
Then $\mathcal T(X)$ is finite.
\end{thm}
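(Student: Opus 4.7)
The strategy is to factor $\mathcal{T}(X)$ through the rational homotopy-theoretic Torelli group and bound each piece separately. Let $\mathcal{E}_*(X) \subset \pi_0(\mathrm{hAut}(X))$ denote the subgroup of homotopy self-equivalence classes of $X$ inducing the identity on $H^*(X;\mathbb{Q})$, and consider the natural forgetful map $\mathcal{T}(X) \to \mathcal{E}_*(X)$. The plan is to show that the image is finite via Sullivan's rational homotopy theory, and that the kernel is finite via surgery in the smooth category.

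For the image, by Sullivan's theorem the group $\mathcal{E}_*(X)$ is commensurable with a finitely generated nilpotent group whose rationalization is computed from positive-degree derivations of the minimal Sullivan model $(\Lambda V, d)$ of $X$, modulo inner derivations. Under hypotheses (1) and (2) the model is explicit through degree $8$: $V^2$ is a basis of $H^2(X;\mathbb{Q})$, no generators appear in degrees $3$ or $4$ (as $H^3 = 0$ and $H^4 = \mathrm{Sym}^2 H^2$ is already realised by products of $V^2$), $V^5$ identifies under $d$ with $\ker(\mathrm{Sym}^3 H^2 \to H^6)$, and further generators in degrees $6, 7$ are added to realise the remaining cohomology through degree $8$. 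A direct computation, exploiting the rigidity forced by Poincar\'e duality, then shows that every positive-degree derivation acting trivially on cohomology is inner, so $\mathcal{E}_*(X)$ is finite.

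For the kernel of $\mathcal{T}(X) \to \mathcal{E}_*(X)$, I would apply the surgery framework for simply-connected closed manifolds in dimension $8 \ge 5$: this kernel fits into an exact sequence involving a block structure set and normal invariants with values in $[X, G/O]$ and in $L$-theory. Hypothesis (3), $p_1(X) \ne 0 \in H^4(X;\mathbb{Q})$, is the key input here; combined with $H^3(X;\mathbb{Q}) = 0$, it rigidifies the tangential data of $X$ so that the normal invariants compatible with triviality on integral cohomology form a finite set. Combined with the first step, $\mathcal{T}(X)$ is an extension of finite groups, hence finite.

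The main obstacle is the derivation computation in the first step, because the cup-product $\mathrm{Sym}^3 H^2 \to H^6$ is not explicitly determined by the hypotheses and the argument must be uniform in this datum. The intended idea is that the inner-derivation freedom afforded by the odd-degree generators in $V^5$ and $V^7$ is rich enough to normalise any positive-degree derivation $D$ to vanish on $V^2$; triviality on cohomology together with compatibility with $d$ then forces $D$ to vanish on all higher generators by nondegeneracy of the Poincar\'e pairing on $X$.
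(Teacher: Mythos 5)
Your step 2 is plausible in outline: a diffeomorphism homotopic to the identity gives a structure on $X\times[0,1]$ rel boundary whose rational normal invariants lie in $H^3(X;\mathbb Q)\oplus H^7(X;\mathbb Q)=0$, and with $L_{10}(\mathbb Z)\cong\mathbb Z/2$ plus Cerf this kernel is finite. But note this uses only $H^3(X;\mathbb Q)=0$ (and $H^7\cong H^1=0$), not $p_1(X)\neq 0$, so hypothesis (3) is misattributed there. The genuine gap is step 1: you assert that the homotopy Torelli group $\mathcal E_*(X)$ is finite, which is exactly the hard part, and you neither prove it nor set it up correctly. Positive-degree derivations of the minimal model compute $\pi_{\ge 1}(\mathrm{aut}(X))\otimes\mathbb Q$, not $\pi_0$; the rationalized Torelli part of $\pi_0(\mathrm{hAut}(X))$ is controlled by degree-zero derivations commuting with $d$ and acting trivially on cohomology, modulo derivations of the form $[d,\theta]$ ("inner derivation" is not the relevant notion for a CDGA). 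The hypotheses pin down the model only through degree 5 ($V^3=V^4=0$, $V^5\cong\ker(\mathrm{Sym}^3H^2\to H^6)$); the generators in degrees $6,7,8$ and the unknown products $\mathrm{Sym}^3H^2\to H^6$, $H^2\otimes H^6\to H^8$ enter the degree-zero derivation computation in an essential way, and the claimed "rigidity forced by Poincaré duality" is precisely the missing argument. For large $b_2$ the spaces $V^5,V^6$ are enormous and it is not at all clear that $\mathcal E_*(X)$ is finite under (1)--(3). Moreover, since neither of your steps genuinely uses $p_1(X)\neq 0$, your outline would prove the theorem without hypothesis (3), which should make you suspicious.

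The paper takes a different and more economical route that avoids any $\pi_0$-level homotopy classification. From the minimal model it extracts only that $\pi_3(X)$ and $\pi_4(X)$ are finite (this is where (1) and (2) are used), passes to the normal $4$-type $p\colon B\to BSO$ with fibre $F$, and uses $p_1(X)\neq 0$ to make $\pi_3(F)$ finite. That finiteness does two jobs: it shows the subgroup $\mathcal T_0(X)$ of Torelli diffeomorphisms preserving the normal $4$-smoothing up to homotopy has finite index (finitely many lifts of the normal Gauss map), and it forces $H_3(F;\mathbb Q)=0$, hence $H_9(B;\mathbb Q)=0$ and $\Omega_9(B,p)$ finite. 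Elements of $\mathcal T_0(X)$ are then distinguished, up to the modified-surgery obstruction in $L_{10}(\mathbb Z)\cong\mathbb Z/2$ and Cerf's pseudo-isotopy theorem, by the bordism classes of their mapping tori in the finite group $\Omega_9(B,p)$, giving finiteness of $\mathcal T_0(X)$ and hence of $\mathcal T(X)$. To salvage your approach you would have to actually carry out (or replace) the degree-zero derivation computation for $\mathcal E_*(X)$, and explain where $p_1(X)\neq 0$ enters; as written, the central finiteness claim is unsupported.
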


\begin{cor} The Torelli group of $K^{[2]}$, where $K$ is a $K_3$-surface, is finite.
\end{cor}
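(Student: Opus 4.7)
The plan is to verify the three hypotheses of Theorem~\ref{thm:finite} for $X = K^{[2]}$, where $K$ is a $K3$-surface, and then invoke the theorem.

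First I would record the relevant rational Betti numbers of $K^{[2]}$ using G\"ottsche's formula. Since $K$ has vanishing odd cohomology and $b_2(K)=22$, expanding the generating series through order $q^2$ yields the Poincar\'e polynomial $1+23t^2+276t^4+23t^6+t^8$. In particular all odd Betti numbers of $K^{[2]}$ vanish, so hypothesis~(2) is immediate: $H^3(K^{[2]};\mathbb Q)=0$.

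For hypothesis~(1), I would observe that $\dim \mathrm{Sym}^2 H^2(K^{[2]};\mathbb Q)=\binom{24}{2}=276=b_4(K^{[2]})$, so it suffices to show that the cup product $\mu\colon \mathrm{Sym}^2 H^2(K^{[2]};\mathbb Q)\to H^4(K^{[2]};\mathbb Q)$ is injective. This is where I would appeal to Verbitsky's theorem on the subalgebra $SH\subseteq H^*$ generated by $H^2$ for an irreducible holomorphic symplectic manifold of complex dimension $2n$: all Verbitsky relations live in degree $\ge 2(n+1)$. For $K^{[2]}$ one has $n=2$, so the relations appear only in degree $\ge 6$, giving injectivity of $\mu$ in degree $4$ and hence the desired isomorphism.

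For hypothesis~(3), since $K^{[2]}$ is holomorphic symplectic one has $c_1(K^{[2]})=0$, and therefore $p_1(K^{[2]})=-2c_2(K^{[2]})$. The non-vanishing of $c_2(K^{[2]})$ in $H^4(K^{[2]};\mathbb Q)$ is classical for hyperk\"ahler manifolds; it can be seen, for instance, from Fujiki's formula which expresses $\int_{K^{[2]}} c_2(K^{[2]})\cdot \alpha^2$ as a positive constant times the Beauville--Bogomolov square $q(\alpha)$, which is non-zero for generic $\alpha \in H^2$.

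The main potential obstacle is hypothesis~(1): the dimension count is trivial, but upgrading it to an isomorphism of the cup product requires the non-trivial Verbitsky theorem (or, alternatively, an explicit verification via the Nakajima/Heisenberg basis for $H^*(K^{[2]})$). Once all three hypotheses are in place, Theorem~\ref{thm:finite} directly yields the finiteness of $\mathcal T(K^{[2]})$.
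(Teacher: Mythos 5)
Your proposal is correct and follows essentially the same route as the paper: check the three hypotheses of Theorem~\ref{thm:finite} for $K^{[2]}$ using the known Betti numbers ($b_2=23$, $b_3=0$, $b_4=276$), the injectivity of $\mathrm{Sym}^2H^2\to H^4$ for hyperk\"ahler manifolds (the paper cites Bogomolov's account of Verbitsky's theorem, you cite the same result directly), and the non-vanishing of $p_1=-2c_2$ (the paper cites Beauville, you argue via the Fujiki-type relation). The only cosmetic difference is the choice of references for these standard facts, not the argument itself.
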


\begin{proof} By \cite{S}  the Betti numbers are $b_2 = 23$, $b_3 =0$ and $b_4 = 276$, which is the same as the dimension of the second symmetric product of $\mathbb Q^{23}$. Since the second symmetric power of the second cohomology of a hyperk\"ahler manifold always injects into the $4$-th cohomology \cite{B}, this implies (1). That $p_1(K^{[2]})$ is non-zero follows from \cite{Be}.
\end{proof}

\begin{rem}This implies that Verbitsky's Theorem 3.5 is true for $K^{[2]}$ and so his main result about the moduli space of hyperk\"ahler manifolds holds for $K^{[2]}$.

\end{rem}

We would like to thank Daniel Huybrechts for bringing Verbitsky's paper \cite{V} to our attention when we told him about our paper \cite{K-S} where we give a complete computation of the mapping class group of certain $6$-manifolds. Since counterexamples to Verbitsky's theorem might be of separate interest we wrote this note. We have sent this note to several people and received a paper by Richard Hain \cite{H} confirming our result that Theorem 3.4 in \cite{V} is incorrect by showing that the Torelli group of certain complex $3$-dimensional K\"ahler manifolds $M$ have an abelian quotient of infinite rank. These examples are different from our examples, they are detected by the induced map  $\pi_3(M)\otimes \mathbb Q  \to \pi_3(M)\otimes \mathbb Q$, whereas for complex $3$-dimensional complete intersections
$\pi_3(X)\otimes \mathbb Q\cong H_3(X; \mathbb Q)$, and so the Torelli groups acts trivial on $\pi_3(X)\otimes \mathbb Q$.

\section{Proofs}

\begin{proof}[Proof of Proposition \ref{prop:homo}] To prove that $J_D$ is a homomorphism we first note that if $X$ is a fibre in $X_f$ then we have an exact sequence with rational coefficients
$$
0 \to H^4(X_f,X) \to H^4(X_f) \to H^4(X) \to 0
$$
and the term on the left is by suspension isomorphism isomorphic to $H^3(X)$ and under this isomorphism $J_D(f)$ corresponds to the class in $H^4(X_f,X)$ which restricts to $ p_1(X_f) - \sum a_{ij} \bar z_i \cup \bar z_j$.

Now we construct $X_{fg}$ from $X_f$ by cutting $I \times X$ along $1/2 \times X$ and regluing it via $g$ from the left to the right. Then we  consider the two fibres over $1/4$ and $3/4$ and denote them by $X_1$ and $X_2$. We consider the restriction map  (with rational coefficients)
$$
H^4(X_{fg}, X_1 \cup X_2) \to H^4(X_{fg}, X_1)
$$
We have an isomorphism
$$H^4(X_f, X) \oplus H^4(X_g, X) \stackrel{\cong}{\rightarrow} H^4(X_{fg}, X_1 \cup X_2)$$
If we identify $H^4(X_{fg},X_1)$ with $H^3(X)$, the homomorphism
$$H^4(X_f, X) \oplus H^4(X_g, X) \stackrel{\cong}{\rightarrow} H^4(X_{fg}, X_1 \cup X_2) \to H^4(X_{fg}, X_1)$$
corresponds to the homomorphism $H^3(X) \oplus H^3(X) \to H^3(X)  $ given by the sum, and the element
$$(p_1(X_f)-\sum a_{ij}\bar z_i \cup \bar z_j, p_1(X_g)-\sum a_{ij}\bar z_i \cup \bar z_j)$$
is mapped  to $p_1(X_{fg})-\sum a_{ij}\bar z_i \cup \bar z_j$. This implies that $J_D(fg) = J_D(f) + J_D(g)$.

\end{proof}

Next we consider the $6$-manifolds in Proposition 1.2. Let $z \in H^{2}(X)$ be a generator, by Poincar\'e duality $H^{4}(X) \cong \mathbb Z$ and thus $z^{2}$ equals to  $d$ times a generator of $H^{4}(X)$. The condition that the cohomology product $H^2(X) \times H^2(X) \to H^4(X)$ is non-trivial is equivalent to saying that $d$ is nonzero. It's shown in \cite{W} and \cite{J} that for every $d \ne 0$ there exist such $X^{6}$. Also there is a connected sum decomposition $X = N \sharp  g(S^{3} \times S^{3})$, where $N$ has the same homology as $\mathbb C \mathrm P^{3}$.

We  introduce a construction of diffeomorphisms, which is a generalization of Dehn twists. Let $S^{3} \times D^{3} \subset X$, $\alpha \in \pi_{3}(SO(4))$, choose a smooth map $\varphi \colon (D^{3},\partial) \to (SO(4),I)$ representing $\alpha$, such that a neighborhood of $\partial D^3$ is mapped to the identity. Define $f \colon X \to X$ by
$$f(u)= \left \{ \begin{array}{cl}
(\varphi (y) \cdot x, y)  & \textrm{if \ } u=(x,y) \in S^{3} \times D^{3} \\
u & \textrm{otherwise} \end{array} \right.$$
We call $f$ a Dehn twist in $S^{3} \times D^{3}$ with parameter $\alpha$.

\begin{proof}[Proof of  Theorem \ref{thm:kaehler}]
The main ingredients of Theorem \ref{thm:kaehler}  already appeared in \cite{K}. The proof of a slightly different situation will be given in  \cite{K-S}. For convenience of the reader we repeat the proof.

By the assumptions the rational Pontrjagin class $p_{1}(X) \in H^{4}(X;\mathbb Q)$ is a multiple of $z^{2}$, say, $p_{1}(X)=l\cdot z^{2}$, $l \in \mathbb Q$. Therefore our invariant $J(f)$ is the pre image of $p_{1}(X_{f})-l \cdot \bar z^{2}$ in $H^{3}(X)$.

In the decomposition $X = N \sharp g(S^{3} \times S^{3})$  we number the standard embedded spheres
$$(S^{3} \times \{pt\})_{1}=S^{3}_{1}, \ (\{pt\} \times S^{3})_{1}=S^{3}_{2}, \ \cdots, (S^{3} \times \{pt\})_{g}=S^{3}_{2g-1}, \ (\{pt\} \times S^{3})_{g}=S^{3}_{2g} $$
They represent a symplectic basis $\{e_{1}, \cdots , e_{2g} \} $ of $H_{3}(X)$.

Let $\alpha \in \pi_{3}(SO(3)) \cong \mathbb Z$ be a generator, $i_{*} (\alpha) \in \pi_{3}(SO(4))$ be its image induced by the inclusion $i \colon SO(3) \to SO(4)$. Then the $4$-dimensional vector bundle $\xi$ over $S^{4}$ corresponding to $i_{*}(\alpha)$ has trivial Euler class and $p_{1}(\xi)$ equals $4$ times a generator of $H^{4}(S^{4})$ (see \cite[Lemma 20.10]{MS}).

Now let $f \colon X \to X$ be the Dehn twist in $S^{3}_{i} \times D^{3}$ with parameter $i_{*} (\alpha)$. Since the Euler class of $i_{*} (\alpha)$ is trivial, it's easy to see that $f$ acts trivially on homology, hence $f \in \mathcal T(X)$. We claim that $J(f)$ equals $4$  times  the Poincar\'e dual of $e_{i}$ in $H^{3}(X)$. This implies that the image of $J$ is a subgroup of $H^{3}(X)$ of the same rank, hence it is a lattice in $H^{3}(X;\mathbb Q)$.

It is enough to show the claim for $i=1$. By the adjunction formula this is equivalent to
$$ \langle J(f), e_i \rangle = 0 \ \  \textrm{for $ i \ne 2$  \ and } \
\langle J(f), e_2 \rangle  = 4.
$$
Since $f|_{S^{3}_{i}}$ is the identity, we have embeddings $S^{3}_{i} \times S^{1} \subset X_{f}$. Let $\bar e_{i}=[S^{3}_{i} \times S^{1}] \in H_{4}(X_{f})$, then $\bar e_{i}$ is a preiamge of $e_{i}$ in the short exact sequence
$$0 \to H_{4}(X) \to H_{4}(X_{f}) \stackrel{\partial}{\rightarrow} H_{3}(X) \to 0$$

There is a decomposition $X_{f} =  (N \times S^{1}) \sharp_{S^{1}} (\sharp_{g}(S^{3} \times S^{3})_{f})$, where $\sharp_{S^{1}}$ denotes the fiber connected sum along $S^{1}$. Then $H^{4}(X_{f}) = H^{4}(N) \oplus H^{4}(\sharp_{g}(S^{3} \times S^{3})_{f})$ and $\bar z^{2}=(z^{2}, 0)$. This implies $\langle \bar z^{2}, \bar e_{i} \rangle =0$.  Therefore we have
$$\langle J(f), e_{i}\rangle = \langle J(f), \partial \bar e_{i} \rangle = \langle \delta J(f), \bar e_{i} \rangle = \langle p_{1}(X_{f}) - l \cdot \bar z^{2}, \bar e_{i} \rangle = \langle p_{1}(X_{f}), [S^{3}_{i} \times S^{1}] \rangle$$
Now notice that $f$ is the identity on a tubular neighborhood of $S^{3}_{i}$'s for $i \ne 2$, therefore the normal bundle of $S^{3}_{i} \times S^{1}$ in $X_{f}$ is trivial, and we have $\langle p_{1}(X_{f}), [S^{3}_{i} \times S^{1}] \rangle =0$ for $i \ne 2$. Let
$$ c \colon S^{3} \times S^{1} \to S^{3} \times S^{1}/S^{3}\times \{pt\} = S^{4}\vee S^{1} \to S^{4}$$
be the quotient map, from the geometric construction it's easy to see that the normal bundle of $S^{3}_{2} \times S^{1}$ in $X_{f}$ equals $c^{*}\xi$. Therefore
$$\langle p_{1}(X_{f}),  [S^{3}_{2} \times S^{1}]\rangle =\langle c^{*}p_{1}(\xi), [S^{3}_{2} \times S^{1}]\rangle = \langle p_{1}(\xi), [S^{4}] \rangle = 4.$$

\end{proof}

The proofs of Theorem \ref{thm:infinite} and Theorem \ref{thm:finite} are based on modified surgery theory \cite{KSD}. We recall some basic definitions here. The normal $k$-type of a smooth oriented manifold $X$ is a fibration $p:B \to BSO$, which is characterized by the assumptions that there is a lift of the normal Gauss map $\nu: X \to BO$ by a $(k+1)$-equivalence $\bar \nu: X \to B$ and that the homotopy groups of the homotopy fibre $F$ are trivial in degree $\ge k+1$ (for details see \cite{KSD}).

To show Theorem \ref{thm:infinite} we will construct infinitely many mapping classes in $\mathcal T(X)$, whose actions on $\pi_{4}(X)$ are pairwisely distinct. Let $p \colon B \to BSO$ be the normal $4$-type of $X$, denote the homotopy fiber of $p$ by $F$, and the kernel of the Hurewicz homomorphism $\pi_4(F) \to H_4(F)$ by $\kf$.

\begin{lem}\label{lem:rank}
Under the assumptions (**) we have
\begin{enumerate}
\item the image of the Hurewicz map $\pi_{4}(B) \to H_{4}(B)$ has rank $>0$;
\item the abelian group $\kf$ has rank $>0$.
\end{enumerate}
\end{lem}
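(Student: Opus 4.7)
The plan is to work rationally using Sullivan's minimal model theory. For a simply-connected space $Y$, write $(\Lambda V^Y, d)$ for its minimal Sullivan model, so that $V^Y_n \cong (\pi_n(Y) \otimes \mathbb{Q})^*$, the Hurewicz image in $H_n(Y;\mathbb{Q})$ has rank $\dim \ker(d|_{V_n^Y})$, and the rational Whitehead product $\pi_p(Y) \otimes \pi_q(Y) \to \pi_{p+q-1}(Y)$ is dual to the component $d|_{V_{p+q-1}^Y} \colon V_{p+q-1}^Y \to V_p^Y \otimes V_q^Y$ of the differential. Since $\bar\nu \colon X \to B$ is a $5$-equivalence, $V^X_n = V^B_n$ for $n \leq 4$ and $H^n(B;\mathbb{Q}) = H^n(X;\mathbb{Q})$ in the same range.

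For part (1): The degree-$4$ part of the model satisfies $(\Lambda V^B)^4 = V_4^B \oplus S^2 V_2^B$ with $d$ trivial on $S^2 V_2^B$, giving $H^4(B;\mathbb{Q}) = \ker(d|_{V_4^B}) \oplus \bigl(S^2 V_2^B / d(V_3^B)\bigr)$. The decomposable summand has dimension at most $b_2(b_2+1)/2$, so
\[
\dim \ker(d|_{V_4^B}) \geq b_4 - b_2(b_2+1)/2 > 0,
\]
which is the rank of the Hurewicz image in $H_4(B;\mathbb{Q})$.

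For part (2): The rank of $K\pi_4(F)$ equals $\dim V_4^F - \dim \ker(d|_{V_4^F}) = \mathrm{rank}(d|_{V_4^F})$. I will show this equals $\mathrm{rank}(d|_{V_4^X})$ and is positive. Using the long exact homotopy sequence of $F \to B \to BSO$ together with $\pi_i(BSO) \otimes \mathbb{Q} = 0$ for $i = 2, 3, 5$, the map $\pi_n(F) \otimes \mathbb{Q} \to \pi_n(B) \otimes \mathbb{Q}$ is an isomorphism for $n=2$, surjective for $n=3$, and injective for $n=4$. By naturality of Whitehead products, and since any class in $\ker(\pi_3(F) \to \pi_3(B))$ arises from $\pi_4(BSO)$ while $\pi_2(BSO) \otimes \mathbb{Q} = 0$, Whitehead products of $\pi_2(F)$ with such classes vanish rationally. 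Therefore the rational Whitehead product on $F$ agrees with that on $B$, equivalently that on $X$ (by the $5$-equivalence), giving $\mathrm{rank}(d|_{V_4^F}) = \mathrm{rank}(d|_{V_4^X})$.

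To finish, Poincaré duality on $X$ yields $\dim H^5(X;\mathbb{Q}) = b_3$, so the cup product $H^2(X;\mathbb{Q}) \otimes H^3(X;\mathbb{Q}) \to H^5(X;\mathbb{Q})$ has kernel of dimension at least $(b_2-1)b_3$, which is $\geq 1$ since each case of Assumption (**) has $b_2 \geq 2$ and $b_3 \geq 1$. A non-zero element of this kernel, represented in the minimal model by $\sum x_i y_i \in V_2^X \otimes \ker(d|_{V_3^X})$, is closed in degree $5$ but exact in $H^5(X;\mathbb{Q})$, hence lies in $d(V_4^X)$; thus $\mathrm{rank}(d|_{V_4^X}) \geq 1$. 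The hard part is the precise identification in part (2) between the Whitehead product on $F$ and that on $X$, which requires checking the differences between $V^F$ and $V^X$ in degrees $3$ and $4$ case-by-case, depending on whether $p_1(X)$ is rationally (non-)decomposable.
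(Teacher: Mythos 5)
Your part (1) is essentially the paper's own argument: a dimension count in the degree--$4$ part of the minimal model showing that at least $b_4-b_2(b_2+1)/2>0$ generators of $V_4$ are closed and hence survive to the Hurewicz image. For part (2), however, you take a genuinely different route, and it works -- in fact your closing worry is unfounded. The paper never touches Whitehead products: it chases the commutative diagram of Hurewicz maps for $F\to B$, using that $H_4(F;\mathbb{Q})\to H_4(B;\mathbb{Q})$ is injective and that $\pi_4(F)\otimes\mathbb{Q}\to\pi_4(B)\otimes\mathbb{Q}$ is injective with image of codimension $0$ or $1$ according to whether $p_1(X)$ is rationally zero; this gives $K\pi_4(F)\otimes\mathbb{Q}=K\pi_4(B)\otimes\mathbb{Q}\cap\mathrm{Im}(\pi_4(F)\otimes\mathbb{Q}\to\pi_4(B)\otimes\mathbb{Q})$, so one may lose one dimension, and the three cases of Assumption (**) are tailored exactly to guarantee $\mathrm{rank}\,K\pi_4(B)\ge (b_2-1)b_3\ge 2$ when $p_1\ne 0$, resp.\ $\ge 1$ when $p_1=0$. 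Your Whitehead-product comparison needs no case-by-case analysis of $p_1$ and no inspection of $V^F$ in degrees $3,4$: since $\pi_3(BSO)=0$ one has $p_*[\alpha,\beta]=[p_*\alpha,0]=0$ for all $\alpha\in\pi_2(B)$, $\beta\in\pi_3(B)$, so every such Whitehead product already lies in $\mathrm{Im}(\pi_4(F)\to\pi_4(B))$; alternatively, as you note, $\alpha$ and $\beta$ themselves lift rationally, and combined with injectivity of $\pi_4(F)\to\pi_4(B)$ (from $\pi_5(BSO)=0$) and naturality this gives $\dim\mathrm{Im}(W_F)\otimes\mathbb{Q}=\dim\mathrm{Im}(W_B)\otimes\mathbb{Q}$, where $W$ denotes the Whitehead pairing $\pi_2\otimes\pi_3\to\pi_4$. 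By Cartan--Serre (equivalently your duality between $W$ and the quadratic part of $d$) the image of $W_B$ is exactly $K\pi_4(B)\otimes\mathbb{Q}$, and your Poincar\'e-duality cup-product count gives $\mathrm{rank}\,K\pi_4(B)\ge (b_2-1)b_3\ge 1$, which finishes (2). (Your stated reason why products with $\ker(\pi_3(F)\to\pi_3(B))$ vanish -- ``arises from $\pi_4(BSO)$ while $\pi_2(BSO)\otimes\mathbb{Q}=0$'' -- is not a proof as written; the correct and simpler reason is again injectivity on $\pi_4$ plus naturality, but this step is not even needed for the rank equality.) What the two approaches buy: the paper's diagram chase is more elementary but is precisely what forces the three-case structure of (**); your route, once completed as above, proves the lemma whenever $b_2\ge 2$, $b_3\ge 1$, $b_4>b_2(b_2+1)/2$, with no hypothesis on $p_1$ at all. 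One small caution: $\pi_1(F)$ may be $\mathbb{Z}/2$, so rather than literally invoking a minimal model of $F$ it is cleaner to argue with Whitehead products directly -- they lie in the kernel of the Hurewicz map of any space -- so that $\mathrm{rank}\,K\pi_4(F)\ge\dim\mathrm{Im}(W_F)\otimes\mathbb{Q}$, which is all you need.
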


\begin{proof}
We use Sullivan's rational homotopy theory \cite{Su} to show the assumptions (**) imply the conclusions in this lemma. For this we construct the minimal model from information of the cohomology ring.

Let $x_{1}, \cdots, x_{b_{2}}$ be free generators of the minimal model in degree $2$ with differential $0$. Let $y_{1}, \cdots, y_{b_{3}}$ be a part of the free generators in degree $3$, which generate $H^{3}(X;\mathbb Q)$. They have trivial differentials. The elements $x_{i}y_{j}$ generate a space of dimension $b_{2}b_{3}$, on which the differential is trivial. Since by Poincar\'e duality the $5$-th Betti number is $b_{3}$, there must be at least $k=(b_{2}-1)b_{3}$ linearly independent indecomposable elements $z_{1}, \cdots , z_{k}$ in degree $4$, which by the differential are mapped injectively to a subspace of the vector space generated by the $x_{i}y_{j}$. In degree $4$  the dimension of the subspace generated  by $x_{i}x_{j}$ ($i \le j$) is $b_{2}(b_{2}+1)/2$, whereas the  $4$-th Betti number is $b_{4}$, there are additional (to the $z_{i}$) free generators $u_{1}, \cdots, u_{l}$ (where $l \ge b_{4}-b_{2}(b_{1}+1)/2 \ge 1$) with trivial differential in degree $4$. There might be more generators in degree $4$ depending on the product $H^2(X ;\mathbb Q) \otimes H^3(X; \mathbb Q) \to H^5(X ;\mathbb Q)$. But alone from this information the two conclusions in the lemma follow. Namely $\mathrm{Hom}(\pi_4(X), \mathbb Q)$ is isomorphic to the degree $4$ subspace modulo the decomposable elements and so contains the subspace with basis $u_1,...., u_{l}$, $z_1,..., z_{k}$. The dual of the rational Hurewicz homomorphism maps $u_i$ injectively.  Dualizing we see that the dual of the $u_i$ are mapped injectively showing (1) and the dual of the $z_i$ are mapped to $0$ in rational homology, so the rank of $K\pi_{4}(B)=\mathrm{Ker}(\pi_{4}(B) \to H_{4}(B))$ is at least $k=(b_{2}-1)b_{3}$.

Consider the following commutative diagram
$$\xymatrix{
\kf \otimes \mathbb Q \ar[d] \ar[r] & \pi_{4}(F) \otimes \mathbb Q \ar[d] \ar[r] & H_{4}(F ;\mathbb Q) \ar[d] \\
K\pi_{4}(B) \otimes \mathbb Q \ar[r] & \pi_{4}(B)\otimes \mathbb Q \ar[r] & H_{4}(B;\mathbb Q)}$$
where $\pi_{4}(F)\otimes \mathbb Q \to \pi_{4}(B) \otimes \mathbb Q$ is an isomorphism if $p_{1}(X) =0 \in H^{4}(X;\mathbb Q)$ and is injective with a codimension $1$ image if $p_{1}(X) \ne 0 \in H^{4}(X;\mathbb Q)$, and $H_{4}(F;\mathbb Q) \to H_{4}(B;\mathbb Q)$ is injective (by the Leray-Serre spectral sequence of the fibration $F \to B \to BSO$). A little diagram chasing show that $$K\pi_{4}(B) \otimes \mathbb Q \cap \mathrm{Im} (\pi_{4}(F) \otimes \mathbb Q \to \pi_{4}(B) \otimes \mathbb Q) = \kf \otimes \mathbb Q.$$
This show that  under the assumption  $\dim_{\mathbb Q}\kf \otimes \mathbb Q \ge 1$.
\end{proof}

Let $Z^{4}(B; \kf)$ be the group of $4$-cocycles with coefficients in $\kf$,  $\autb$ be the group of fiber homotopy classes of fiber homotopy equivalences of $p \colon B \to BSO$. We will first define a map
$$\Phi \colon Z^{4}(B;\kf) \to \autb$$
as follows.

Let $B^{(4)}$ be the $4$-skeleton of $B$, $e_{1}^{4}, \cdots ,e_{m}^{4}$ be the $4$-cells, then the $4$ dimensional cellular chain group $C_{4}(B)$ is a free abelian group generated by $e_{i}^{4}$ ($i=1, \cdots , m$). Let $q \colon B^{(4)} \to B^{(4)} \vee \vee_{i=1}^{m}S^{4}_{i}$ be the pinch map, where we pinch each $4$-cell $e^{4}_{i}$ to $e^{4}_{i} \vee S^{4}_{i}$. Let $\alpha \colon C_{4}(B) \to \kf$ be a cocycle. For each $\alpha (e_{i}^{4}) \in \kf$, by abuse of notation, we view it as a map $S_{i}^{4} \to B$.  Let $i \colon B^{(4)} \to B$ be the inclusion, define $h \colon B^{(4)} \to B$ to be the composition
$$h=(i \vee \vee_{i=1}^{m} \alpha(e_{i}^{4})) \circ q \colon B^{(4)} \to B^{(4)} \vee \vee_{i=1}^{m}S^{4}_{i} \to B.$$
By the construction $h$ is compatible with the fiber projection $p$, i.~e.~the following diagram commutes up to homotopy
$$\xymatrix{
B^{(4)} \ar[d]^{i}  \ar[r]^{h} & B \ar[d]^{p} \\
B \ar[r]^{p} & BSO}
$$

Now we show that we can extend $h$ to a map on the $5$-skeleton of $B$ compatible with $p$.
Let $e^{5}$ be a $5$-cell, with attaching map $f \colon S^{4} \to B^{(4)}$. By Blakers-Massey theorem we have an isomorphism
$$\pi_{4}(B^{(4)} \vee \vee_{i=1}^{m}S^{4}_{i}) \cong \pi_{4}(B^{(4)}) \oplus \oplus_{i=1}^{m}\pi_{4}(S^{4}_{i}), $$
under this isomorphism the image of $[q \circ f]$ is clearly
$([f], \partial e^{5})$. Since $\alpha$ is a cocyle, $\langle \alpha, \partial e^{5} \rangle =\langle \delta \alpha, e^{5}\rangle =0$, therefore $h$ extends to a map $h \colon B^{(5)} \to B$, compatible with the fiber projection $p$. (The extension may not be unique, we choose one extension.) Since $\pi_{n}(F)=0$ for $n \ge 5$, we can extend $h$ further to a fiber map $h \colon B \to B$.

\begin{lem}\label{lem:h}
$h$ is a fiber homotopy equivalence. $h_{*}$ is the identity on $H_{i}(B)$ for $i \le 4$. $h^{*}$ is the identity on $H^{i}(B)$ for $i \le 3$. $h_{*}$ is the identity on $H_{8}(B;\mathbb Q)$.

\end{lem}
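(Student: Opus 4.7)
The plan is to treat the four claims in order, reducing each either to the identity of $h$ on the $3$-skeleton $B^{(3)}$ or to the hypothesis $\alpha(e_{j}^{4})\in\kf=\ker\!\bigl(\pi_{4}(F)\to H_{4}(F)\bigr)$.

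For the fiber homotopy equivalence, by Dold's criterion it suffices to verify $h$ is a weak homotopy equivalence, i.\,e.\ that $h_{*}$ is an isomorphism on every $\pi_{n}(B)$. For $n\le 3$, cellular approximation and $h|_{B^{(3)}}=\mathrm{id}$ give $h_{*}=\mathrm{id}$; for $n\ge 5$ the vanishing $\pi_{n}(F)=0$ gives $p_{*}\colon\pi_{n}(B)\xrightarrow{\cong}\pi_{n}(BSO)$, on which $h$ acts trivially because $p\circ h=p$. In the remaining case $n=4$, one has $h_{*}-\mathrm{id}$ landing in the image of $\kf\to\pi_{4}(B)$, and a two-sided inverse is exhibited by running the same construction $\Phi$ with the cocycle $-\alpha$; the required identities $\Phi(\alpha)\circ\Phi(-\alpha)\simeq\mathrm{id}\simeq\Phi(-\alpha)\circ\Phi(\alpha)$ follow from a cell-by-cell comparison.

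For $h_{*}=\mathrm{id}$ on $H_{i}(B)$ with $i\le 4$, the cases $i\le 3$ are immediate from $h|_{B^{(3)}}=\mathrm{id}$. For $i=4$, cellularly one has $h_{\#}(e_{j}^{4})=e_{j}^{4}+c_{j}$, where $c_{j}\in C_{4}(B)$ is a $4$-cycle representing $\alpha(e_{j}^{4})\in\pi_{4}(B)$ pushed forward from $\pi_{4}(F)$. By naturality of the Hurewicz map, $[c_{j}]\in H_{4}(B)$ is the image of $\alpha(e_{j}^{4})\in\kf$ along $\pi_{4}(F)\to H_{4}(F)\to H_{4}(B)$, which vanishes because $\alpha(e_{j}^{4})$ is already in the kernel of the first arrow. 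Hence $h_{*}=\mathrm{id}$ on $H_{4}(B)$, and the statement $h^{*}=\mathrm{id}$ on $H^{i}(B)$ for $i\le 3$ follows from $h|_{B^{(3)}}=\mathrm{id}$ by dualising the cellular chain identity.

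For $h_{*}=\mathrm{id}$ on $H_{8}(B;\mathbb{Q})$ I would apply the rational Serre spectral sequence of $F\to B\to BSO$. Since $h$ covers the identity on $BSO$, it induces a morphism of spectral sequences acting as $\mathrm{id}\otimes(h|_{F})^{*}$ on $E_{2}^{p,q}=H^{p}(BSO;\mathbb{Q})\otimes H^{q}(F;\mathbb{Q})$, so the task reduces to proving $(h|_{F})^{*}=\mathrm{id}$ on $H^{*}(F;\mathbb{Q})$. Because $F$ is simply connected with $\pi_{n}(F)=0$ for $n\ge 5$, its rational cohomology ring is generated in degrees $\le 4$, and it suffices to check those degrees. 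A diagram chase in the long exact sequence of the fibration, using $h_{*}=\mathrm{id}$ on $\pi_{i}(B)$ for $i\le 3$ and on all $\pi_{*}(BSO)$, shows $(h|_{F})_{*}=\mathrm{id}$ on $\pi_{i}(F)$ for $i\le 3$, while on $\pi_{4}(F)$ the difference $(h|_{F})_{*}-\mathrm{id}$ lies in $\kf$. Rationally, $H_{4}(F;\mathbb{Q})$ splits as the Hurewicz image of $\pi_{4}(F)$ plus decomposables built from products of classes in degrees $\le 3$, and $(h|_{F})_{*}$ acts as the identity on both summands, so by duality $(h|_{F})^{*}=\mathrm{id}$ on $H^{\le 4}(F;\mathbb{Q})$ and hence on the entire ring. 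Consequently $h^{*}=\mathrm{id}$ on $H^{*}(B;\mathbb{Q})$, in particular on $H^{8}$, equivalently $h_{*}=\mathrm{id}$ on $H_{8}(B;\mathbb{Q})$. The main technical difficulty is controlling $(h|_{F})^{*}$ on the primitive (indecomposable) degree-$4$ cohomology of $F$, where the Hurewicz-kernel hypothesis on $\alpha$ is used essentially.
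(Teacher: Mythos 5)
Your handling of the second, third and fourth claims follows the paper's own line: the chain-level formula $h_{\#}(e^{4}_{j})=e^{4}_{j}+c_{j}$ with $[c_{j}]$ the image of $\alpha(e^{4}_{j})$ under $\pi_{4}(F)\to H_{4}(F)\to H_{4}(B)$, hence zero, gives the identity on $H_{4}(B)$, and for $H_{8}(B;\mathbb Q)$ both you and the paper reduce, via the Serre spectral sequence and the absence of indecomposables in $H^{\ge 5}(F;\mathbb Q)$, to showing that the induced self-map of $F$ is the identity on $H^{\le 4}(F;\mathbb Q)$. Where you genuinely diverge is the isomorphism on $\pi_{4}(B)$: the paper gets surjectivity from the homology statement by (relative) Hurewicz and then invokes that a surjective endomorphism of a finitely generated abelian group is injective, whereas you propose a homotopy inverse $\Phi(-\alpha)$. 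That route is not a formal ``cell-by-cell comparison'': on a $4$-cell the composite is $e^{4}_{j}\mapsto e^{4}_{j}+\alpha(e^{4}_{j})+\Phi(\alpha)_{*}\bigl(-\alpha(e^{4}_{j})\bigr)$, so you must prove that $\Phi(\alpha)_{*}$ fixes the classes $\alpha(e^{4}_{j})\in\pi_{4}(B)$; this uses that their cellular images are boundaries together with the cocycle condition $\langle\alpha,\partial d\rangle=\langle\delta\alpha,d\rangle=0$, and then obstruction theory ($\pi_{n}(F)=0$ for $n\ge 5$) to compare the two maps beyond the $4$-skeleton. As written this is asserted, not proved. (The same cocycle computation shows directly that $h_{*}-\mathrm{id}$ kills every class of $\pi_{4}(B)$ with null-homologous cellular image, so $h_{*}$ is unipotent on $\pi_{4}(B)$; that closes the gap without constructing an inverse, and is closer in spirit to the paper's argument.)

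The second soft spot is your claim that $(h|_{F})_{*}=\mathrm{id}$ on $\pi_{i}(F)$ for $i\le 3$ ``by a diagram chase'': the ladder of the fibration only gives $(h|_{F})_{*}x-x\in\ker\bigl(\pi_{i}(F)\to\pi_{i}(B)\bigr)=\mathrm{im}\bigl(\partial\colon\pi_{i+1}(BSO)\to\pi_{i}(F)\bigr)$, not that the difference vanishes. This is harmless for $i=2$ and, since $\pi_{5}(BSO)=0$, your degree-$4$ statement (difference in $K\pi_{4}(F)$) is fine once the previous point is settled; but for $i=3$ the image of $\pi_{4}(BSO)\otimes\mathbb Q$ in $\pi_{3}(F)\otimes\mathbb Q$ is nonzero precisely when $p_{1}(X)=0$ in $H^{4}(X;\mathbb Q)$, which is allowed (indeed required in case (3)) under Assumption (**), and its Hurewicz image in $H_{3}(F;\mathbb Q)$ need not vanish there. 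So your chase does not yet yield $(h|_{F})^{*}=\mathrm{id}$ on $H^{3}(F;\mathbb Q)$ in that case, and hence not on $H^{8}(F;\mathbb Q)$. To be fair, the paper simply asserts that $h_{*}$ is the identity on $H_{i}(F)$ for $i\le 4$ without argument; your proposal makes the difficulty visible but does not resolve it, so if you keep this route you need an additional argument (for instance, exploiting the explicit construction of $h$ or the choice of vertical homotopy over $BSO$) to control the degree-$3$ fiber cohomology.
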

\begin{proof}
It's clear from the construction that $h_{*}$ is the identity on $H_{i}(B)$ for $i \le 3$ and $h^{*}$ is the identity on $H^{i}(B)$ for $i \le 3$. On the chain level $h_{*}$ is
$$h_{*} \colon C_{4}(B) \to C_{4}(B), \ \ e_{i}^{4} \mapsto e_{i}^{4}+ \alpha(e_{i}^{4})$$
where $\alpha(e_{i}^{4})$ is a boundary by definition. Therefore $h_{*}$ is the identity on $H_{4}(B)$. By the Hurewicz theorem we see that  $h_{*} \colon \pi_{i}(B) \to \pi_{i}(B)$ is an isomorphism for $i \le 3$ and is a surjection for $i=4$.  Notice that  $\pi_{4}(B)$ is a finitely generated abelian group, a surjective endomorphism of a finitely generated abelian group must also be injective, therefore $h_{*}$ is an isomorphism on $\pi_{4}(B)$. Since $p \colon B \to B\mathrm{SO}$ is $4$-coconnected, $h$ induces isomorphisms on $\pi_{i}(B)$ for all $i$. Therefore $h$ is a fiber homotopy equivalence.

Since $\pi_{n}(F)=0$ for $n \ge 5$, by rational homotopy theory, there are no indecomposable cohomology classes in $H^{n}(F;\mathbb Q)$ for $n \ge 5$. Therefore  elements in  $H^{8}(F; \mathbb Q)$ are linear combinations of products of cohomology classes of degree $\le 4$. Since $h_{*}$ is the identity on $H_{i}(F)$ for $i \le 4$, $h^{*}$ is the identity on $H^{i}(F;\mathbb Q)$ for $i \le 4$ and hence also the identity on $H^{8}(F;\mathbb Q)$. Thus $h_{*}$ is the identity on $H_{8}(F;\mathbb Q)$. By the Leray-Serre spectral sequence of the fibration $F \to B \to B\mathrm{SO}$, $h_{*}$ is the identity on $H_{8}(B;\mathbb Q)$.
\end{proof}

\begin{proof}[Proof of Theorem \ref{thm:infinite}]
By Lemma \ref{lem:rank} the image of $\pi_{4}(B) \to H_{4}(B)$ has rank $k>0$. Let $x_{1}, \cdots, x_{k}$ be homology classes in the image which generate a free abelian subgroup of rank $k$ in $H_{4}(B)$, $y_{i} \in \pi_{4}(B)$ be a pre-image of $x_{i}$, represented by a map $S^{4} \to B$.  Then we may take $B^{(4)}$ of the form
$$B^{(4)}=X \vee \vee_{i=1}^{k}S^{4}_{i}$$
where $X$ is a $4$-complex, and $[S^4_i]=y_i$ for $i=1,\cdots, k$.

Let $\varphi \in \mathrm{Hom}(H_{4}(B), \kf)$ be a homomorphism, since there are surjective homomorphisms
$$Z^{4}(B, \kf) \to H^{4}(B, \kf) \to \mathrm{Hom}(H_{4}(B), \kf)$$
we may pick a pre-image of $\varphi$, say $\alpha \in Z^{4}(B, \kf) $. Let $h \colon B \to B$ be the image of $\alpha$ under $\Phi$. Consider the action of $h$ on $\pi_{4}(B)$, especially the image of $y_{i}$ under $h_{*}$: let $\iota \colon S^{4} \to B$ be the inclusion of $S^{4}_{i}$, clearly the homotopy class of the composition
$$h \circ \iota  \colon S^{4} \to B^{(4)} \to B^{(4)} \vee \vee_{i=1}^{m}S^{4}_{i} \to B$$
equals $y_{i}+\varphi(x_{i}) \in \pi_{4}(B)$.

By Lemma \ref{lem:rank} there are infinitely many $\varphi \in \mathrm{Hom}(H_{4}(B), \kf)$ such that $\{ \varphi(x_{i}) | \ i=1, \cdots , k \}$ are pairwisely  distinct. Therefore we have constructed infinitely many $h \in \autb$ with properties in Lemma \ref{lem:h}.

Let $\om_{8}(B,p)$ be the $8$ dimensional $B$-bordism group.  It's isomorphic to  the $8$-dimensional stable homotopy group of the corresponding Thom spectrum. By the Atiyah-Hirzebruch spectral sequence there is an isomorphism
$$\om_{8}(B,p) \otimes \mathbb Q \to  H_{8}(B;\mathbb Q)$$
where the image of a bordism class $[N,f] \in \Omega_8(B,p)$ is the image of the fundamental class $f_{*}[N] \in H_8(B;\mathbb Q)$.

Now we fix a normal $4$-smoothing $\bar \nu \colon X \to B$.  Consider the normal $4$-smoothing $\bar \nu \circ h$, where $h \in \autb$ is constructed as above. Since $h_{*}$ is the identity on $H_{8}(B;\mathbb Q)$, we have
$$[X, \bar \nu]=[X, h \circ \bar \nu] \in \om_{8}(B,p) \otimes \mathbb Q$$
Therefore there are infinitely many $h$ (indexed by $i \in \mathbb N$) such that $$[X, h_{i} \circ \bar \nu]=[X, h_{j} \circ \bar \nu] \in \om_{8}(B,p).$$
Let $W_{i}$ be a $B$-bordism between $[X, h_{0} \circ \bar \nu]$ and $[X, h_{i} \circ \bar \nu]$, by the theory of modified surgery, the obstruction to changing $W$ by surgery to an $h$-cobordism is in $L_{9}(\mathbb Z)=0$. Therefore there is a diffeomorphism $f_{i} \colon X \to X$ such that $h_{i} \circ \bar \nu \circ f_i \simeq h_{0} \circ \bar \nu $. Since $\bar \nu$ is a $5$-equivalence, we see that $f_{i*}$ is the identity on $H_{i}(X)$ for $i \le 4$, and $f_{i}^{*}$ is the identity on $H^{i}(X)$ for $i \le 3$. Therefore by Poincar\' e duality, $f_{i*}$ is the identity on $H_{*}(X)$ and hence $f \in \mathcal T(X)$.  But $f_{i*}$ on $\pi_{4}(X)$ are pairwisely distinct. Therefore we have constructed infinitely many elements in $\mathcal T(X)$.
\end{proof}

\begin{proof}[Proof of Theorem \ref{thm:finite}]
Let $p \colon B \to BSO$ be the normal $4$-type of $X$, $F$ be the homotopy fiber of $p$. Fix a normal $4$-smoothing $\bar \nu \colon X \to B$.

We claim that $\pi_3(X)$ and $\pi_4(X)$ are finite. For this we consider the minimal model of $X$. It has generators $x_1,..., x_r$ in degree $2$, where $r$ is the second Betti number. By assumption $x_i \cup x_j$ for $i \le j$ is a basis of $H^4(X;\mathbb Q)$. This and the fact that $H^3(X;\mathbb Q) = 0$  implies that there are no generators in degree $3$ and hence $\pi_3(X)$ is finite. This implies that there are no decomposable elements in degree $5$. Thus the differential on elements of degree $4$ is zero. But then there are no indecomposable elements in degree 4, since they would produce indecomposable cohomology classes. Thus $\pi_4(X)$ is finite. Finally we conclude that also $\pi_3(F)$ and $\pi_4(F)$ are finite. This follows from the homotopy sequence of the fibration $F \to B \to BSO$ since $\pi_i(X) \cong \pi_i(B)$ for $i\le 4$ and $p_1(X) \ne 0$ implying that $\pi_4(B) \to \pi_4(BSO)$ is non-trivial.

Let $f \in \mathcal T(X)$ be a self-diffeomorphism, then $\bar \nu \circ f$ is also a normal $4$-smoothing. Let $\mathcal T_0(X)$ be the subset of $\mathcal T(X)$ consisting of self-diffeomorphisms $f$ such that $\bar \nu \circ f$ and $\bar \nu$ are homotopic as liftings of $\nu$.

\begin{lem} Under our conditions
$\mathcal T_0(X)$ is a finite index subgroup of $\mathcal T(X)$.
\end{lem}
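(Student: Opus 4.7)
The plan is to show that the index $[\mathcal T(X):\mathcal T_0(X)]$ is finite by identifying it with the cardinality of the $\mathcal T(X)$-orbit of $[\bar\nu]$ on the set of homotopy classes of lifts of $\nu:X\to B\mathrm{SO}$ through $p:B\to B\mathrm{SO}$. For any $f\in\mathcal T(X)$ we have $(\bar\nu\circ f)^* = f^*\circ\bar\nu^* = \bar\nu^*$ on $H^*(B;\mathbb Z)$, since $f^*=\mathrm{id}$ on $H^*(X;\mathbb Z)$. Hence this orbit lies inside the set $L$ of homotopy classes of lifts $g$ satisfying $g^* = \bar\nu^*$ on $H^*(B;\mathbb Z)$, and it suffices to show $L$ is finite.

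I would proceed via obstruction theory for the fibration $F\to B\to B\mathrm{SO}$: two lifts of $\nu$ that agree on the $(n-1)$-skeleton of $X$ differ, up to homotopy, by a class in $H^n(X;\pi_n(F))$, and since $\dim X=8$ only $n\leq 8$ contributes. First, I would dispose of the easy ranges. The group $H^1(X;\pi_1(F))$ vanishes because $X$ is simply connected. For $n\geq 5$ we have $\pi_n(F)=0$ since $F$ is the homotopy fiber of the normal $4$-type. For $n=3,4$, the groups $\pi_3(F)$ and $\pi_4(F)$ are finite, as established in the paragraph immediately preceding the lemma, so $H^3(X;\pi_3(F))$ and $H^4(X;\pi_4(F))$ are both finite.

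The hard step is level $n=2$, where $\pi_2(F) = \ker(\pi_2(B)\to\pi_2(B\mathrm{SO})=\mathbb Z/2)$ is a free abelian subgroup of $H_2(X;\mathbb Z)=\mathbb Z^{b_2}$ of index at most two, so $H^2(X;\pi_2(F))$ is typically of rank $b_2^2$ and hence infinite. To kill this I plan to exploit the cohomology constraint defining $L$. A level-$2$ difference $d\in H^2(X;\pi_2(F)) = \mathrm{Hom}(H_2(X),\pi_2(F))$ changes the induced map on $c\in H^2(B;\mathbb Z)$ according to the formula $g^*(c)-\bar\nu^*(c) = \bar c\circ d$, where $\bar c:\pi_2(F)\to\mathbb Z$ is the restriction of $c$ to the fiber. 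By the Serre spectral sequence of $F\to B\to B\mathrm{SO}$, the transgression $d_3:H^2(F;\mathbb Q)\to H^3(B\mathrm{SO};\mathbb Q)=0$ vanishes, so the edge map $H^2(B;\mathbb Q)\twoheadrightarrow H^2(F;\mathbb Q)=\mathrm{Hom}(\pi_2(F),\mathbb Q)$ is surjective, which implies that the image of $H^2(B;\mathbb Z)$ in $\mathrm{Hom}(\pi_2(F),\mathbb Z)$ has finite index. Since $\pi_2(F)$ is torsion-free, imposing $g^*=\bar\nu^*$ on $H^2(B;\mathbb Z)$ forces $d=0$. Combining everything yields $|L|\leq |H^3(X;\pi_3(F))|\cdot|H^4(X;\pi_4(F))|<\infty$, finishing the argument. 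The main obstacle is precisely this level-$2$ calculation; the higher levels are handled by elementary finiteness of the coefficient groups.
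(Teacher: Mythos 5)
Your proof is correct, and its overall skeleton (reduce the index to counting homotopy classes of lifts of $\nu$ through $p$ among the $\bar\nu\circ f$, then induct over the skeleta using that $\pi_3(F),\pi_4(F)$ are finite and $\pi_i(F)=0$ for $i\ge 5$) is the same as the paper's. Where you genuinely diverge is the treatment of the one problematic stage, $H^2(X;\pi_2(F))$, which is infinite. The paper kills it by using that $f$ acts as the identity on $\pi_2(X)=H_2(X)$: since $\bar\nu$ is a $5$-equivalence and $\pi_3(BSO)=0$, the map $\pi_2(F)\to\pi_2(B)$ is injective, and the level-$2$ difference class of $\bar\nu$ and $\bar\nu\circ f$ maps to $\bar\nu_*f_*-\bar\nu_*=0$ on $\pi_2$, so it vanishes exactly. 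You instead use $f^*=\mathrm{id}$ on $H^2(X;\mathbb Z)$ together with a Serre spectral sequence argument that $H^2(B;\mathbb Z)$ restricts onto a finite-index subgroup of $\mathrm{Hom}(\pi_2(F),\mathbb Z)$; this works, but is more roundabout and needs two small repairs: (i) the hypotheses of the theorem are purely rational, so $H_2(X;\mathbb Z)$, hence $\pi_2(F)$, may have torsion; your argument then only forces the level-$2$ difference to be torsion-valued, which still leaves finitely many possibilities, so finiteness survives but the clean ``$d=0$'' does not; (ii) in the spectral sequence you should also note that $d_2\colon E_2^{0,2}\to H^2(BSO;H^1(F;\mathbb Q))$ vanishes (it does, since $\pi_1(F)$ is at most $\mathbb Z/2$, so $H^1(F;\mathbb Q)=0$), not only the transgression to $H^3(BSO;\mathbb Q)$. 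The paper's use of the $\pi_2$-action buys a shorter argument free of these torsion and spectral sequence subtleties; your version has the mild advantage of only invoking the hypothesis that $f$ is trivial on cohomology, without translating it into a statement about $\pi_2$.
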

\begin{proof}
It suffices to show that there are finitely many homotopy classes of $\bar \nu \circ f $ over $p$, which in turn follows if  there are finitely many lifts of the Gauss map $\nu$  over $p$. This follows by induction over the skeleta and the Puppe sequence and the fact that $f$ induces the identity on $\pi_2(X) = H_2(X)$ from the fact that the homotopy groups $\pi_3(F)$ and $\pi_4(F)$ are finite and $\pi_i(F) =0$ for $i \ge 5$.
\end{proof}

We finish the proof of the theorem by showing that $\mathcal T_0(X)$ is finite. Given $f \in \mathcal T_0(X)$, choosing a homotopy $h \colon X \times [0,1] \to B$ between $\bar \nu$ and $\bar \nu \circ f$ we obtain a normal $B$-structure $\varphi \colon X_{f} \to B$, where $X_{f} $ is the mapping torus. This represents an element $[X_{f} ,\varphi]$ in $\Omega_{9}(B,p)$. Again by the Atiyah-Hirzebruch spectral sequence we have $\Omega_{9}(B,p) \otimes \mathbb Q \cong H_{9}(B;\mathbb Q)$.
\begin{lem}
$H_9(B;\mathbb Q)=0$.
\end{lem}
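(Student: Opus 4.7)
The plan is to show $H^9(B;\mathbb Q)=0$ by verifying, via Sullivan's rational homotopy theory, that the rational cohomology of $B$ in degrees $\le 9$ is concentrated in even degrees. This reduces to computing the low-degree rational homotopy groups of $B$ and checking that they sit only in even degrees.

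First I would determine $\pi_i(B)\otimes\mathbb Q$ for $1\le i\le 9$. Since $\bar\nu\colon X\to B$ is a $5$-equivalence, $\pi_i(B)\cong\pi_i(X)$ for $i\le 4$, and the minimal-model argument for $X$ just carried out gives $\pi_3(X),\pi_4(X)$ finite while $\pi_2(B)\otimes\mathbb Q\cong H_2(X;\mathbb Q)=\mathbb Q^r$ with $r=b_2(X)$. For $5\le i\le 9$ I would use the homotopy long exact sequence of $F\to B\to BSO$ together with $\pi_i(F)=0$ for $i\ge 5$ and the Bott values $\pi_j(BSO)\otimes\mathbb Q=0$ for $j\in\{3,5,6,7,9\}$ and $\pi_4(BSO)\otimes\mathbb Q=\pi_8(BSO)\otimes\mathbb Q=\mathbb Q$. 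For $i\ge 6$ both $\pi_i(F)$ and $\pi_{i-1}(F)$ vanish, so $\pi_i(B)\cong\pi_i(BSO)$; and for $i=5$, $\pi_5(B)$ is squeezed between $\pi_5(F)=0$ and $\pi_5(BSO)=0$. The upshot is that the positive-degree rational homotopy groups of $B$ through degree $9$ sit only in even degrees, namely $\pi_2(B)\otimes\mathbb Q=\mathbb Q^r$ and $\pi_8(B)\otimes\mathbb Q=\mathbb Q$.

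Second I would apply Sullivan theory. Since $B$ is simply connected (via $\bar\nu$) and its rational homotopy groups through degree $9$ sit only in even degrees, the rational Postnikov tower of $B$ through degree $8$ is the product $K(\mathbb Q^r,2)\times K(\mathbb Q,8)$: the only potential $k$-invariant attaching the degree-$8$ stage lives in $H^9(K(\mathbb Q^r,2);\mathbb Q)=(\mathbb Q[x_1,\dots,x_r])^9=0$, since a polynomial algebra on even-degree generators has no odd-degree elements. Further Postnikov stages can only affect $H^n(B;\mathbb Q)$ for $n\ge 10$. So $H^9(B;\mathbb Q)$ equals the degree-$9$ piece of the polynomial algebra $\mathbb Q[x_1,\dots,x_r,u]$ with $|x_i|=2,|u|=8$, which vanishes because $2k+8\ell=9$ has no solution in non-negative integers. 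Hence $H^9(B;\mathbb Q)=0$ and dually $H_9(B;\mathbb Q)=0$.

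The one delicate point to monitor is the vanishing of $\pi_5(B)\otimes\mathbb Q$: since $\pi_4(B)\cong\pi_4(X)$ is finite, the map $\pi_4(B)\to\pi_4(BSO)=\mathbb Z$ is automatically zero, so a rational $\mathbb Q$-summand is forced into $\pi_3(F)$; this does not propagate to $\pi_5(B)$, which stays zero by the squeeze between $\pi_5(F)=0$ and $\pi_5(BSO)=0$. (Thus $F$ is \emph{not} rationally trivial, contrary to a naive reading of the preceding paragraph, but $B$ nonetheless has the rational homotopy type required for the conclusion.)
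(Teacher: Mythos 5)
Your proof is correct, but it takes a genuinely different route from the paper's. The paper argues through the fibre: by the Leray--Serre spectral sequence of $F \to B \to BSO$ (whose base has rational homology only in degrees divisible by $4$), the vanishing of $H_9(B;\mathbb Q)$ is reduced to $H^5(F;\mathbb Q)=H^9(F;\mathbb Q)=0$, which -- since $\pi_n(F)=0$ for $n\ge 5$ forces $H^*(F;\mathbb Q)$ to be generated in degrees $\le 4$ -- is reduced further to $H^3(F;\mathbb Q)=0$; this last step is where $p_1(X)\ne 0$ enters, phrased in the paper as surjectivity of $\pi_4(B)\otimes\mathbb Q\to\pi_4(BSO)\otimes\mathbb Q$. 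As you correctly note, that phrasing sits badly with the fact that $\pi_4(B)\cong\pi_4(X)$ is finite, so this map is actually zero and $\pi_3(F)\otimes\mathbb Q\cong\mathbb Q$; the vanishing $H^3(F;\mathbb Q)=0$ is better justified by observing that $p^*p_1\in H^4(B;\mathbb Q)\cong H^4(X;\mathbb Q)$ equals $-p_1(X)\ne 0$, which makes the degree-$3$ generator of a Sullivan model of $F$ non-closed. You bypass $F$ altogether: using the $5$-equivalence $\bar\nu$, the squeeze $\pi_5(F)=0=\pi_5(BSO)$, the isomorphisms $\pi_i(B)\cong\pi_i(BSO)$ for $i\ge 6$, and Bott periodicity, you show $\pi_i(B)\otimes\mathbb Q$ is concentrated in degrees $2$ and $8$ for $i\le 9$, and then read off $H^9(B;\mathbb Q)=0$ from the rational Postnikov tower (equivalently, the minimal model of $B$ has no odd-degree elements below degree $10$), whence $H_9(B;\mathbb Q)=0$ by universal coefficients. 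All steps, including your parenthetical correction that $F$ is not rationally trivial, are sound; your argument has the added features that it avoids the delicate point in the paper's own justification and does not use hypothesis (3) ($p_1(X)\ne 0$) at all for this lemma, whereas the paper's route does use it (that hypothesis is of course still needed elsewhere in the proof of the theorem).
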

\begin{proof}
By the Leray-Serre spectral sequence of the fibration $F \to B \to BSO$, it suffices to show that $H^5(F;\mathbb Q)=0=H^9(F;\mathbb Q)$. But since there are no indecomposable classes in $H^n(F;\mathbb Q)$ for $n \ge 5$, it is enough to show that $H^3(F;\mathbb Q)=0$. Again by the Leray-Serre spectral sequence there is an exact sequence
$$H_4(BSO;\mathbb Q) \stackrel{d}{\rightarrow} H_3(F;\mathbb Q) \to H_3(B;\mathbb Q)=0$$
where $d$ can be identified with the composition
$$H_4(BSO;\mathbb Q) = \pi_4(BSO) \otimes \mathbb Q \stackrel{\partial}{\rightarrow} \pi_3(F) \otimes \mathbb Q \to H_3(F;\mathbb Q).$$
If $p_1(X) \ne 0 \in H^4(X;\mathbb Q)$, then $p_* \colon \pi_4(B) \otimes \mathbb Q \to \pi_4(BSO) \otimes \mathbb Q$ is surjective. This implies $H_3(F;\mathbb Q)=0$.
\end{proof}

Therefore the finitely generated abelian group $\Omega_9(B,p)$ is finite. There are finitely many $f_1, \cdots , f_r \in \mathcal T_0(X)$, together with homotopies $h_1, \cdots, h_r$ such that for any $f \in \mathcal T_0(X)$ and homotopy $h$, $[X_f, \varphi] = [X_{f_i}, \varphi_i]$ for some $1 \le i \le r$. This implies that there is a $B$-bordism $W$ between $X \times [0,1]$ and $X \times [0,1]$, where the two boundary components are identified by $f$ and $f_i$ respectively. The modified surgery obstruction to changing $W$ to a relative $h$-cobordism is $\theta(W) \in L_{10}(\mathbb Z) \cong \mathbb Z/2$. If $\theta (W)=0$, there is a relative $h$-cobordism between $X \times [0,1]$ and $X \times [0,1]$. Such an $h$-cobrodism gives rise to a diffeomorphism $F \colon X \times [0,1] \to X \times [0,1]$ which is a pseudo-isotopy between $f$ and $f_i$. Therefore by Cerf's pseudo-isotopy theorem \cite{Cerf} $f$ is isotopic to $f_i$. If $\theta(W) \ne 0$, then for any $f'$ with this property, we may glue $W$ and $W'$ and get a $B$-bordism $V$ between $X \times [0,1]$ and $X \times [0,1]$ where the two boundary components are identified by $f$ and $f'$ respectively. Now the surgery obstruction $\theta(V)=0$, therefore $f$ and $f'$ are isotopic. This shows that there are at most $2r$ elements in $\mathcal T_0(X)$.
\end{proof}


\end{document}